\theoremstyle{definition}
\newtheorem{definition}{Definition}%[section]
\newtheorem{example}{Example}%[definition]
\theoremstyle{theorem}
\newtheorem{theorem}{Theorem}%[definition]
\newtheorem{lemma}{Lemma}%[definition]
\newtheorem{proposition}{Proposition}%[definition]
\newtheorem*{remark}{Remark}%[definition]
\title{General $(\alpha, \beta)$ metrics with relatively isotroic mean Landsberg curvature}
\author{ A. Ala, A. Behzadi\footnote{Corresponding author.}  ~~and  M. Rafiei-Rad }
\begin{document}
\maketitle
\begin{abstract}
In this paper, we study a new class of Finsler metrics, $F=\alpha\phi(b^2,s)$, $s:=\beta/\alpha$, defined by a Riemannian metric $\alpha$ and 1-form $\beta$.
It is called general $(\alpha, \beta)$ metric.
In this paper, we assume $\phi$ be coefficient by $s$ and $\beta$ be closed and conformal. 
We find a nessecary and sufficient condition for the metric of relatively isotropic mean Landsberg curvature to be Berwald.
\end{abstract}

\textbf{Keywords:} Finsler geometry, Relatively isotropic mean Landsberg curvature , General ($\alpha$, $\beta$)-metrics.

\section{Introduction}
The $(\alpha, \beta)$ metrics were first introduced by Matsumoto \cite{Matsumoto-1991}.
They are Finsler metrics built from a Riemannian metric $\alpha=\sqrt{a_{ij}y^iy^j}$ and 1-form $\beta=b_i(x)y^i$ and a $C^\infty$ function $\phi(s)$ on a manifold $M$. A Finsler metric of $(\alpha, \beta)$ metrics is given by the form
\begin{equation*}
F:=\alpha\phi(s), \quad s:=\dfrac{\beta}{\alpha}
\end{equation*}
It is known that $F$ is positive and strongly convex on $TM\setminus\{0\}$ if and only if
\begin{equation*}
\phi(s)>0, \quad \phi(s) -s\phi'(s)+(b^2-s^2)\phi''(s)>0, \quad |s|\leq b<b_0,
\end{equation*}
where $b=||\beta||_\alpha$.

The aim of this paper is to study a new class of Finsler metrics given by 
\begin{equation}\label{def-general metric}
F:=\alpha\phi(b^2,s), \quad s:=\dfrac{\beta}{\alpha}
\end{equation}
where $\phi=\phi(b^2,s)$ is a $C^\infty$ positive function and $b=||\beta||_\alpha$ is its norm\cite{Shen-Yuan-2016}. It is called general $(\alpha, \beta)$ metrics.
This kind of metrics is first discussed by Yu and Zhu \cite{Yu-Zhu-2011}.
Many well-known Finsler metrics are general $(\alpha, \beta)$ metrics.
\begin{example}\cite{Shen-Yuan-2016}
The Randers metrics and the square metrics are defined by functions $\phi=\phi(b^2,s)$ in the following form:
\begin{align}
\phi &=\dfrac{\sqrt{1-b^2+s^2}+s}{1-b^2}\\
\phi &=\dfrac{\left( \sqrt{1-b^2+s^2}+s \right)^2}{(1-b^2)^2\sqrt{1-b^2+s^2}}
\end{align}
\end{example}
\begin{example}\cite{Song-Wang-2012}
One Important example of $(\alpha, \beta)$ metric was given by L. Berwald
\begin{equation}
F=\dfrac{\left(\sqrt{(1-|x|^2)|y|^2+\langle x,y\rangle ^2}+\langle x,y\rangle\right)^2} {(1-|x|^2)^2\sqrt{(1-|x|^2)|y|^2+\langle x,y\rangle ^2}}.
\end{equation}
It is a projectively flat Finsler metrics on $\mathbb{B}\subset\mathbb{R}^n$ with flag curvature $K=0$. Berwald's metric can be expressed in form
\begin{equation}
F=\alpha\phi(b^2,s)=\alpha\left(\sqrt{1+b^2}+s^2\right)^2
\end{equation}
where
\begin{align}
&\alpha=\dfrac{\sqrt{(1-|x|^2)|y|^2+\langle x,y\rangle ^2}}{1-|x|^2},\quad
\beta=\dfrac{\langle x,y\rangle}{(1-|x|^2)^{3/2}} \\
&s:=\dfrac{\beta}{\alpha}, \qquad b^2=\dfrac{|x|^2}{1-|x|^2}
\end{align}
\end{example}
\begin{example}\cite{Shen-Yuan-2016}
There is a special class of general $(\alpha, \beta)$ metrics called {\it spherically symmetric metrics}, which are defined on an open subset of $\mathbb{R}^n$ with $\alpha=|y|$ and $\beta=\langle x,y \rangle$,
\begin{equation*}
F=|y|\phi\left( |x|^2, \dfrac{\langle x,y\rangle}{|y|} \right).
\end{equation*}
\end{example}

In Finsler geometry, there are several very important non-Riemannian quantities.
The Cartan torsion $\mathbf{C}$ is a primary quantity. There is another quantity which is determined by the Busemann-Hausdorff volume form, that is the so-called distortion $\tau$. The vertical differential of $\tau$ on each tangent space gives rise to mean Cartan torsion $\mathbf{I}:=\tau_{y^k} dx^k$.
$\mathbf{C}$, $\tau$ and $\mathbf{I}$ are the basic geometric quantities which characterize Riemannian metrics among Finsler metrics. Differentiating $\mathbf{C}$ along geodesics gives rise to the Landsberg curvature $\mathbf{L}$.
The horizontal derivative of $\tau$ along geodesics is the so-called $S$-curvature $\mathbf{S}:=\tau_{|k}y^k$. 
The horizontal derivative of $\mathbf{I}$ along geodesics is called the mean Landsberg curvature $\mathbf{J}:=\mathbf{I}_{|k}y^k$.

By the definition, $\mathbf{J}/\mathbf{I}$ can be regarded as the relative growth rate of the mean Cartan torsion along geodesics. We call a Finsler metric $F$ is of {\it relatively isotropic mean Landsberg curvature} if $F$ satisfies $\mathbf{J}+\tilde{c}F\mathbf{I}=0$, where $\tilde{c}=\tilde{c}(x)$ is a scalar function on the Finsler manifold.
In particular, when $\tilde{c}=0$, Finsler metrics with $\mathbf{J}=0$ are called weak Landsberg metrics \cite{ChernShen-2005}.

We study general $(\alpha, \beta)$ metrics with relatively isotropic mean Landsberg curvature, where $\beta$ is a closed and conformal 1-form, i.e.
\begin{equation}\label{beta-closed-conformal}
b_{i|j}=c(x)a_{ij},
\end{equation}
where $b_{i|j}$ is the covariant derivation of $\beta$ with respect to $\alpha$ and $c=c(x)\neq 0$ is a scalar function on $M$. In \cite{Zohrehvand-Maleki-2016}, Zohrehvand and Maleki
proved that, every Landsberg general $(\alpha, \beta)$ metric is a Berwald metric with condition \eqref{beta-closed-conformal}.
In \cite{MyPaper1}, the authors showed that this result for the metric of mean Landsberg curvature.

In this paper, we prove the following
\begin{theorem}\label{main-theo}
Let $F=\alpha\phi(b^2,s), s:=\beta/\alpha$, be a non-Riemannian general $(\alpha, \beta)$ metric on an $n$-dimensional manifold $M$. Suppose that $\beta$ satisfies \eqref{beta-closed-conformal}. If $\phi=\phi(b^2,s)$ is a polynomial in $b^2$ and $s$, then $F$ is of relatively isotropic mean Landsberg curvature, $\mathbf{J}+\tilde{c}(x) F\mathbf{I}=0$, if and only if it is a Berwald metric.
In this case,
\begin{equation}
\phi(b^2,s)=c_0(b^2)+c_1(b^2)s+c_2(b^2)s^2+\dots+c_m(b^2)s^m,
\end{equation}
where
\begin{equation*}
c_0(b^2)=\dfrac{a_0}{\sqrt{b^2}},\quad
c_1(b^2)=\dfrac{a_1}{b^2},\quad
c_2(b^2)=\dfrac{a_2}{(b^2)^{\frac{3}{2}}},\quad
...,\quad
c_m(b^2)=\dfrac{a_m}{(b^2)^{\frac{m+1}{2}}}
\end{equation*}
and $a_i$, $1\leq i\leq n$ are constants.
\end{theorem}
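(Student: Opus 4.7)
The strategy is to translate the condition $\mathbf{J}+\tilde{c}(x)F\mathbf{I}=0$, under the closed and conformal hypothesis \eqref{beta-closed-conformal}, into a single scalar identity on $\phi(b^2,s)$; exploiting the fact that $\alpha$ is irrational in $y$ while $\beta$ is linear in $y$, I would then split this identity into two independent PDEs. The polynomial hypothesis on $\phi$ in $s$ reduces these to a triangular system of first-order ODEs in $b^2$ for the coefficients $c_k(b^2)$, whose only solutions are those stated in the theorem; these in turn force the non-Riemannian part of the spray coefficients to be quadratic in $y$, i.e.\ $F$ is Berwald.

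First, I would record the standard formula for the mean Cartan torsion of a general $(\alpha,\beta)$ metric: $I_i = \rho(b^2,s)\,h_i$, where $h_i:=b_i - s\alpha^{-1}y_i$ and $\rho$ is a rational expression in $\phi,\phi_s,\phi_{ss}$. Next, under $b_{i|j}=c(x)a_{ij}$ the spray coefficients of $F$ take the form $G^i = G^i_\alpha + c(x)\alpha\,\Theta(b^2,s)\,y^i + c(x)\alpha^2\,\Psi(b^2,s)\,b^i$, with $\Theta,\Psi$ expressible through $\phi$ and its $s$- and $b^2$-derivatives. Using the identities $\beta_{|k}y^k=c\alpha^2$ and $(b^2)_{|k}y^k=2c\beta$ together with the chain rule for horizontal derivatives of the scalars $s$ and $b^2$, I would compute $J_i = I_{i|k}y^k$ and arrive at an expression of the form $J_i = c(x)\,\Xi(b^2,s)\,h_i$, where $\Xi$ is rational in $\phi,\phi_s,\phi_{ss},\phi_{sss},\phi_{b^2},\phi_{sb^2}$.

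Substituting into $\mathbf{J}+\tilde{c}F\mathbf{I}=0$ and cancelling the common factor $h_i$ yields the scalar identity
$$c(x)\,\Xi(b^2,s) + \tilde{c}(x)\,\alpha\,\phi(b^2,s)\,\rho(b^2,s) = 0.$$
All quantities here are rational functions of $\alpha$ and $\beta$ except for the extra factor of $\alpha$ in the second term, so after clearing denominators and multiplying by an appropriate power of $\alpha$ I would separate the resulting polynomial identity in $y$ into its parts of even and odd degree in $\alpha$, obtaining two independent equations on $\phi(b^2,s)$ together with a rigidity on $\tilde{c}(x)$. Plugging in the polynomial ansatz $\phi=\sum_{k=0}^m c_k(b^2)s^k$ and matching coefficients of $s^j$ then produces a triangular system of first-order ODEs in $b^2$ for the $c_k$, whose unique solutions are $c_k(b^2)=a_k(b^2)^{-(k+1)/2}$ with $a_k\in\mathbb{R}$.

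The main obstacle will be the bookkeeping in the second step: the expression for $J_i$ involves up to third $s$-derivatives and first $b^2$-derivatives of $\phi$, and the even/odd decomposition in $\alpha$ must be carried out carefully so that the resulting ODEs on the $c_k$ genuinely decouple in triangular fashion. Once the coefficients are pinned down, a short direct computation shows that $\Psi\equiv 0$ for this $\phi$, so $G^i = G^i_\alpha + c\alpha\Theta y^i$ is quadratic in $y$ and $F$ is Berwald. Conversely, every Berwald metric has $\mathbf{L}=0$, hence $\mathbf{J}=0$; since $F$ is non-Riemannian, $\mathbf{I}\not\equiv 0$, so the relatively isotropic condition $\mathbf{J}+\tilde{c}F\mathbf{I}=0$ forces $\tilde{c}\equiv 0$ and is trivially satisfied.
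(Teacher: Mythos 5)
Your overall reduction (scalar identity on $\phi$, polynomial ansatz, coefficient matching, ODEs for the $c_k$) matches the paper's skeleton, but two of your key steps do not survive scrutiny.

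First, the ``even/odd in $\alpha$'' splitting rests on a miscount. For a general $(\alpha,\beta)$ metric with $\beta$ closed and conformal, $I_i=\frac{1}{2\alpha\rho}V(b^2,s)(b_i-sl_i)$ with $l_i=a_{ij}y^j/\alpha$, so $FI_i=\alpha\phi\cdot\frac{1}{2\alpha\rho}V(b_i-sl_i)=\frac{\phi}{2\rho}V(b_i-sl_i)$: the factor of $\alpha$ from $F$ cancels against the $\alpha^{-1}$ in $\mathbf{I}$, and likewise $J_i=-\frac{c\phi}{2\rho}W(b^2,s)(b_i-sl_i)$ by Proposition \ref{prop-meanLandsberg-prop}. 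The condition $\mathbf{J}+\tilde c F\mathbf{I}=0$ is therefore a \emph{single} scalar identity $cW(b^2,s)+\tilde cV(b^2,s)=0$ in the variables $(b^2,s)$; there is no residual odd power of $\alpha$ to separate off, hence no pair of ``independent PDEs.'' The paper's actual mechanism is different: after substituting $\phi=\sum_k c_k(b^2)s^k$ the identity becomes a polynomial in $s$ vanishing on an interval, so every coefficient $v_i$ vanishes, and it is the top-degree coefficient (e.g.\ $v_2=(n+1)\{2cc_1(2c_1c_0'-c_0c_1')+\tilde c\,c_1^3\}$ for $m=1$, and $v_{17}$ containing $927n\tilde c\,c_2^9$ for $m=2$) that forces $\tilde c\equiv 0$. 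Your proposal never isolates this step, yet it is the heart of the argument: only after $\tilde c=0$ does the condition reduce to the weak Landsberg system \eqref{equation of weak Landsberg-1}--\eqref{equation of weak Landsberg-2}, from which the ODEs for the $c_k$ are actually extracted.

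Second, your concluding step --- ``$\Psi\equiv0$, so $G^i=G^i_\alpha+c\alpha\Theta y^i$ is quadratic in $y$, hence Berwald'' --- is incomplete even granting the computation. With $\beta$ closed and conformal the spray is $G^i=G^i_\alpha+c\alpha^2E\,l^i+c\alpha^2Hb^i$ as in \eqref{spray-coeff with cond-closed and conf}; killing the $b^i$-term still leaves $c\alpha^2E\,l^i=c\alpha E\,y^i$, which is homogeneous of degree two in $y$ but is a quadratic \emph{form} only if $\alpha E$ is linear in $y$, i.e.\ only if $E$ is proportional to $s$. You would need to verify this for the specific solutions $c_k(b^2)=a_k(b^2)^{-(k+1)/2}$, which you do not. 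The paper sidesteps the issue entirely: once $\tilde c=0$ the metric is weak Landsberg, and Theorem \ref{Main Theo in My paper 1} (weak Landsberg implies Landsberg, hence Berwald by the Zohrehvand--Maleki result) delivers the conclusion. Your converse direction (Berwald $\Rightarrow\mathbf{L}=0\Rightarrow\mathbf{J}=0$, take $\tilde c=0$) is fine.
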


Because every analytic function can be approximated by a series polynomials, we can assume that $\phi$ is a polynomial in $b^2$ and $s$.

\section{Preliminary}\label{sec-Prelimi.}
Let $F=F(x,y)$ be a Finsler metric on an $n$-dimensional manifold $M$. Let
\[
g_{ij}:=\dfrac{1}{2}[F^2]_{y^iy^j}(x,y)
\]
and $(g^{ij}):=(g_{ij})^{-1}$. For a non-zero vector 
$y=y^i\frac{\partial}{\partial x^i}\mid_x \in T_xM$, $F$ induces an inner product on $T_xM$
\[ g_y(u,v)=g_{ij}u^iv^j , \]
where 
$u=u^i\frac{\partial}{\partial x^i}, v=v^j\frac{\partial}{\partial x^j} \in T_xM$.
$g=\{g_y\}$ is called the fundamental tensor of $F$.

Let
\[ C_{ijk}:=\dfrac{1}{4}[F^2]_{y^iy^jy^k}
=\dfrac{1}{2}\dfrac{\partial g_{ij}}{\partial y^k}. \]
Define symmetric trilinear form $\mathbf{C}:=C_{ijk}(x,y)dx^i \otimes dx^j \otimes dx^k$ on $TM\setminus\{0\}$. We call $\mathbf{C}$ the {\it Cartan torsion}. The {\it mean Cartan torsion} $\mathbf{I}=I_idx^i$ is defined by
\[ I_i:=g^{jk}C_{ijk}. \]
Further, we have (\cite{ChernShen-2005}, \cite{BaoChernShen-2000})
\begin{equation}\label{def-I_i}
I_i=g^{jk} C_{ijk} =\dfrac{\partial}{\partial y^i}\left[\ln\sqrt{\det(g_{jk})} \right]
\end{equation}

For a Finsler metric $F$, the geodesics are characterized locally by a system of 2nd ODEs:
\[ \dfrac{d^2x^i}{dt^2}+ 2G^i\left( x, \dfrac{dx}{dt} \right)=0, \] 
where
\begin{equation}
G^i=\dfrac{1}{4}g^{il}\left\{ [F^2]_{x^my^l}y^m -[F^2]_{x^l} \right\}.
\end{equation}
$G^i$ are called the {\it geodesic coefficients} of $F$.

For a tangent vector $y=y^i\dfrac{\partial}{\partial x^i}\in T_xM$, the {\it Berwald curvature} 
$\mathbf{B}:=B_{jkl}^i dx^j\otimes \dfrac{\partial}{\partial x^i}\otimes dx^k \otimes dx^l$
can be expressed by
\begin{equation}
B_{jkl}^i:=\dfrac{\partial^3 G^i}{\partial y^j \partial y^k \partial y^l}.
\end{equation}
$F$ is a Berwald metric if $\mathbf{B}=0$.
The {\it Landsberg curvature} $\mathbf{L}=L_{ijk}(x,y) dx^i \otimes dx^j \otimes dx^k$ is a horizontal tensor on $TM\setminus\{0\}$ defined by \cite{Shen-2004-book}
\begin{equation}
L_{ijk}:=-\dfrac{1}{2} F F_{y^m}[G^m]_{y^iy^jy^k}.
\end{equation}
$F$ is called a {\it Landsberg metric} if $\mathbf{L}=0$. The {\it mean Landsberg curvature} $\mathbf{J}=J_i dx^i$ is defined by
\begin{equation}
J_i:=g^{jk}L_{ijk}.
\end{equation}
We call $F$ a {\it weak Landsberg metric} if $\mathbf{J}=0$. We say that $F$ is of {\it relatively isotropic mean Landsberg curvature} if $J_i+\tilde{c}(x)F I_i=0$ for a scalar function $\tilde{c}=\tilde{c}(x)$ on $M$.

Now we consider a general $(\alpha,\beta)$ metric:
\begin{definition}
Let $F$ be a Finsler metric on an $n$-dimensional manifold $M$. $F$ is called a {\it general $(\alpha, \beta)$ metric} if it can be expressed as the form \eqref{def-general metric} where $||\beta||_\alpha\leq b_0$ and $\phi=\phi(b^2,s)$ is a positive $C^\infty$ function.
\end{definition}
\begin{proposition}
Let $M$ be an $n$-dimensional manifold. A function $F=\alpha\phi(b^2,s)$ on $TM$ is a Finsler metric on $M$ for any Riemannian metric $\alpha$ and 1-form $\beta$ with $||\beta||_\alpha <b_0$ if and only if $\phi=\phi(b^2,s)$ is a positve $C^\infty$ function satisfying
\begin{equation}\label{condition for positive definite}
\phi>0, \quad \phi -s\phi_2 +(b^2-s^2)\phi_{22}>0,
\end{equation}
where $s$ and $b$ are arbitrary numbers with $|s|\leq b<b_0$.
\end{proposition}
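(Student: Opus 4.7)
The plan is to compute the fundamental tensor $g_{ij}$ explicitly for $F=\alpha\phi(b^2,s)$, derive a closed-form factorization of $\det(g_{ij})$, and then read off the positive-definiteness conditions. A key observation is that $b^2=a^{ij}b_ib_j$ depends only on $x$, so in taking $y$-derivatives of $\phi(b^2,s)$ only the partial derivatives $\phi_2=\partial_s\phi$ and $\phi_{22}=\partial_s^2\phi$ enter. Consequently the computation is structurally identical to the classical $(\alpha,\beta)$-metric computation in \cite{ChernShen-2005}, \cite{BaoChernShen-2000}, with $\phi'(s)$ and $\phi''(s)$ merely relabelled as $\phi_2$ and $\phi_{22}$.

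First I would compute $F_{y^i}=\phi\,y_i/\alpha+\phi_2(b_i-sy_i/\alpha)$ using $\alpha_{y^i}=y_i/\alpha$ (with $y_i:=a_{ij}y^j$) and $s_{y^i}=(b_i-sy_i/\alpha)/\alpha$, and then differentiate once more to write
\[
g_{ij}=\rho\,a_{ij}+\rho_0\,b_ib_j+\rho_1\bigl(b_i\alpha_j+b_j\alpha_i\bigr)+\rho_2\,\alpha_i\alpha_j,
\]
where $\alpha_i:=y_i/\alpha$ and $\rho,\rho_0,\rho_1,\rho_2$ are explicit combinations of $\phi,\phi_2,\phi_{22}$ and $s$. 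Second, I would evaluate $\det(g_{ij})$ at a point $x_0$ by choosing an $a$-orthonormal frame in which $b_i=(b,0,\dots,0)$, so that $s=by^1/\alpha$; the residual $O(n-1)$-symmetry in the variables $y^2,\dots,y^n$ then lets me further normalise $y^\mu=0$ for $\mu\geq 3$. The resulting matrix splits into an $(n-2)\times(n-2)$ diagonal block and a $2\times 2$ block whose determinants combine to give a factorization of the form
\[
\det(g_{ij})=\phi^{n+1}\bigl[\phi-s\phi_2+(b^2-s^2)\phi_{22}\bigr]\det(a_{ij})\cdot P,
\]
with $P>0$ an explicit positive factor independent of $\phi_2,\phi_{22}$.

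Third, I would deduce the equivalence from this factorization. For sufficiency, assuming $\phi>0$ and $\phi-s\phi_2+(b^2-s^2)\phi_{22}>0$ for all $|s|\leq b<b_0$, the above shows $\det(g_{ij})>0$; testing $g_{ij}$ against the radial direction $y$ and against $\alpha$-orthogonal directions, and invoking continuity in $s$, upgrades positivity of the determinant to strict positive-definiteness, so $F$ is a Finsler metric for every admissible $(\alpha,\beta)$. For necessity, given that $F$ is a Finsler metric for \emph{every} Riemannian $\alpha$ and 1-form $\beta$ with $\|\beta\|_\alpha<b_0$, we may vary the pair $(\alpha,\beta)$ to realise arbitrary values of $b$ and $s$ in the range $|s|\leq b<b_0$; the inequalities then follow from $\phi=F/\alpha>0$ and from the sign of the scalar factor in $\det(g_{ij})$. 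The main obstacle will be the determinant computation: although the classical $(\alpha,\beta)$ case is documented, one must track the normalisation constants carefully so that the scalar factor emerging from the $2\times 2$ block is exactly $\phi-s\phi_2+(b^2-s^2)\phi_{22}$ and that no stray factor involving $\phi-s\phi_2$ alone remains, since the statement asserts that this single combined inequality (together with $\phi>0$) is already necessary and sufficient.
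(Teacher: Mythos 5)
There is a genuine gap at exactly the point you flag as ``the main obstacle.'' The factor involving $\phi-s\phi_2$ alone does \emph{not} cancel: with $g_{ij}=\rho a_{ij}+\rho_0 b_ib_j+\rho_1(b_i\alpha_j+b_j\alpha_i)-s\rho_1\alpha_i\alpha_j$ and $\rho=\phi(\phi-s\phi_2)$, your own frame choice makes the $(n-2)\times(n-2)$ block equal to $\rho\,\delta_{\mu\nu}$, so it contributes $\phi^{n-2}(\phi-s\phi_2)^{n-2}$ to the determinant --- a factor that manifestly depends on $\phi_2$, contrary to your claim that $P$ is independent of $\phi_2,\phi_{22}$. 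The correct factorization (Lemma 1.1.1 of Chern--Shen, used verbatim in the paper) is
\begin{equation*}
\det(g_{ij})=\phi^{n+1}(\phi-s\phi_2)^{n-2}\bigl[\phi-s\phi_2+(b^2-s^2)\phi_{22}\bigr]\det(a_{ij}),
\end{equation*}
so your plan, as written, would terminate with an apparently unaccounted-for sign condition on $\phi-s\phi_2$.

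The missing idea is not a cancellation but a specialization: the second inequality in the hypothesis is assumed for \emph{all} $s,b$ with $|s|\leq b<b_0$, so one may set $b=|s|$, which annihilates the $(b^2-s^2)\phi_{22}$ term and yields $\phi-s\phi_2>0$ for every $|s|<b_0$. This is the one nontrivial step in the paper's proof, and with it every factor in the displayed determinant is positive, giving $\det(g_{ij})>0$ at all admissible $(b,s)$. Your remaining steps are sound --- in fact your continuity/deformation argument for upgrading a nowhere-vanishing determinant to positive-definiteness is more careful than the paper's bare assertion, and your treatment of necessity by varying $(\alpha,\beta)$ to realise arbitrary admissible $(b,s)$ matches what the paper leaves as ``obvious.'' But without the $b=|s|$ specialization the proof does not close.
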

\begin{proof}
It is easy to verify $F$ is a function with regularity and positive homogeneity.
In the following we will verify strong convexity: The $n\times n$ Hessian matrix
\begin{equation*}
(g_{ij}):=(\dfrac{1}{2}[F^2]_{y^iy^j}).
\end{equation*}
For the general $(\alpha, \beta)$ metric $F=\alpha\phi(b^2, \frac{\beta}{\alpha})$, direct computations yield
\begin{align}
[F^2]_{y^i} &= [\alpha^2]_{y^i}\phi^2 +2\alpha^2 \phi\phi_2 s_y^i \\
[F^2]_{y^iy^j} &=[\alpha^2]_{y^iy^j}\phi^2 
+2[\alpha^2]_{y^i}\phi\phi_2 s_{y^j}
+2[\alpha^2]_{y^j}\phi\phi_2 s_{y^i}
+2\alpha^2 [\phi_2]^2 s_{y^i} s_{y^j} \nonumber\\
&\quad +2\alpha \phi_{22} \phi_2 s_{y^i}s_{y^j}
+2\alpha[\phi_2]^2 s_{y^i}s_{y^j}
\end{align}
Direct computations yield
\begin{equation}\label{def-g_ij}
g_{ij}=\rho a_{ij} + \rho_0 b_ib_j +\rho_1(b_i\alpha_{y^i}+b_j\alpha_{y^j})
-s\rho_1 \alpha_{y^i}\alpha_{y^j},
\end{equation}
where
\begin{equation}\label{def-rho}
\rho=\phi(\phi-s\phi_2), \quad
\rho_0=\phi\phi_{22}+(\phi_2)^2,\quad
\rho_1=(\phi-s\phi_2)\phi_2 -s\phi\phi_{22}.
\end{equation}
By Lemma 1.1.1 in \cite{ChernShen-2005}, we find a formula for $\det(g_{ij})$
\begin{equation}\label{def-detg_ij}
\det(g_{ij})=\phi^{n+1}(\phi-s\phi_2)^{n-2}(\phi-s\phi_2 +(b^2-s^2)\phi_{22})\det(a_{ij}).
\end{equation}
Assume that \eqref{condition for positive definite} is satisfied. Then by taking $b=s$ in \eqref{condition for positive definite}, we see that the following inequality holds for any $s$ with
\begin{equation}\label{cond-s=b}
\phi -s \phi_2 >0, \quad |s|<b_0.
\end{equation}
Using \eqref{condition for positive definite}, \eqref{def-detg_ij} and \eqref{cond-s=b}, we get $\det(g_{ij})>0$, namely $(g_{ij}$ is positive-definite. The converse is obvious, so the proof is omitted here.
\end{proof}

By Lemma 1.1.1 in \cite{ChernShen-2005}, we find a formula for $(g^{ij})$
\begin{equation}\label{def-g^ij}
g^{ij}=\rho^{-1}\{ a^{ij}+\eta b^i b^j + \eta_0 \alpha^{1} (b^iy^j+b^jy^i)
+\eta_1 \alpha^{-2} y^iy^j \}
\end{equation}
where $(g^{ij})=(g_{ij})^{-1}$, $(g_{ij})=\frac{1}{2}[F^2]_{y^iy^j}$, $(a^{ij})=(a_{ij})^{-1}$, $b^i=a^{ij} b_j$,
\begin{align}
\eta &=-\dfrac{\phi_{22}}{\phi -s\phi_2 +(b^2-s^2)\phi_{22}},\quad
\eta_0=-\dfrac{(\phi-s\phi_2)\phi_2 -s\phi \phi_{22}}{\phi(\phi -s\phi_2 +(b^2-s^2)\phi_{22})} \label{def-eta}\\
\eta_1 &=\dfrac{(s\phi + (b^2-s^2)\phi_2)((\phi -s\phi_2)\phi_2-s\phi\phi_{22})}{\phi^2(\phi -s\phi_2 +(b^2-s^2)\phi_{22})}\nonumber
\end{align}
\begin{remark}
Note that $\phi_1$ means the derivation of $\phi$ with respect to the first variable $b^2$. In this paper, $\beta$ is closed and conformal 1-form, i.e.
$b_{i|j}=c(x)a_{ij}$. Let \cite{Yu-Zhu-2011}
\begin{align*}
&r_{ij}:=\dfrac{1}{2}(b_{i|j}+b_{j|i}), \quad
r_{00}:=r_{ij}y^iy^j, \quad
r_i:=b^jr_{ji}, \quad
r_0:=r_iy^i,\quad
r^i:=a^{ij}r_j,\quad
r:=b^ir_i,\\
&s_{ij}:=\dfrac{1}{2}(b_{i|j}-b_{j|i}), \quad
s_0^i:=a^{ik}s_{kj}y^j,\quad
s_i:=b^j s_{ji}, \quad
s_0:=s_iy^i,\quad
s^i:=a^{ij}s_j
\end{align*}

For a general $(\alpha, \beta)$ metric, its spray coefficients $G^i$ are related to the spray coefficients $G_\alpha^i$ of $\alpha$ by \cite{Yu-Zhu-2011}
\begin{align}\label{spray coefficient}
G^i =&G_\alpha^i+\alpha Q s_{~0}^i+\lbrace \Theta (-2\alpha Q s_0+r_{00}+2\alpha^2 Rr)+\alpha\Omega (r_0+s_0)\rbrace l^i \nonumber \\
&+\lbrace \Psi (-2\alpha Q s_0+r_{00} +2\alpha^2 Rr)+\alpha \Pi (r_0+s_0) \rbrace b^i -\alpha^2 R(r^i+s^i),
\end{align}
where $l^i:=\dfrac{y^i}{\alpha}$ and
\begin{align*}
Q&:=\dfrac{\phi_2}{\phi-s\phi_2},\quad
R:=\dfrac{\phi_1}{\phi-s\phi_2},\\
\Theta &:=\dfrac{(\phi-s\phi_2)\phi_2-s\phi\phi_{22}}{2\phi(\phi-s\phi_2+(b^2-s^2)\phi_{22})},\quad
\Psi:=\dfrac{\phi_{22}}{2(\phi-s\phi_2+(b^2-s^2)\phi_{22})} \\
\Pi &:=\dfrac{(\phi -s\phi_2)\phi_{12} -s\phi_1 \phi_{22}}{(\phi-s\phi_2)(\phi-s\phi_2+(b^2-s^2)\phi_{22})},\quad
\Omega:=\dfrac{2\phi_1}{\phi}-\dfrac{s\phi+(b^2-s^2)\phi_2}{\phi}\Pi
\end{align*}

When $\beta$ is closed and conformal one-form, i.e. satisfies
\eqref{beta-closed-conformal},
then
\begin{equation*}
r_{00}=c\alpha^2,~~~ r_0=c\beta,~~~ r=cb^2,~~~ r^i=cb^i,~~~ s_{~0}^i=s_0=s^i=0
\end{equation*}
Substituting this into \eqref{spray coefficient} yields \cite{Yu-Zhu-2011}
\begin{equation}\label{subs-spray-coeff}
G^i:=G_\alpha^i +c\alpha^2\lbrace \Theta (1+2Rb^2)+s\Omega \rbrace l^i+ c\alpha^2\lbrace \Psi(1+2Rb^2)+s\Pi -R \rbrace b^i
\end{equation}
If we have
\begin{align}
E &:=\dfrac{\phi_2 +2s\phi_1}{2\phi}-H\dfrac{s\phi +(b^2-s^2)\phi_2}{\phi} \label{def-E} \\
H&:=\dfrac{\phi_{22}-2(\phi_1-s\phi_{12})}{2(\phi-s\phi_2+(b^2-s^2)\phi_{22})}, \label{def-H}
\end{align}
then from \eqref{subs-spray-coeff}
\begin{equation}\label{spray-coeff with cond-closed and conf}
G^i:=G_\alpha^i+c\alpha^2 E l^i+c\alpha^2 Hb^i.
\end{equation}
\end{remark}

\begin{proposition}\label{prop-meanLandsberg-prop}\cite{MyPaper1}
Let $F=\alpha\phi(b^2,s),~s=\beta/\alpha$, be a general $(\alpha, \beta)$-metric on an $n$-dimensional manifold $M$. Suppose that $\beta$ satisfies \eqref{beta-closed-conformal}, then the weak Landsberg curvature of $F$ is given by
\begin{equation}\label{prop-mean Landsberg}
J_j=-\dfrac{c\phi}{2\rho} W_{j},
\end{equation}
where
\begin{align}\label{def-W_j}
W_j:=& \big\{ (E-sE_2)(n+1)\phi_2 +3E_{22} \phi_2(b^2-s^2)-sE_{22}(n+1) \phi+E_{222}\phi(b^2-s^2)  \nonumber\\
&\quad +\lbrace (H_2-sH_{22})(n+1)+H_{222}(b^2-s^2)\rbrace (s\phi +(b^2-s^2)\phi_2)  \nonumber\\
&\quad +3\eta (E-sE_2)\phi_2(b^2-s^2) +3\eta E_{22}\phi_2(b^2-s^2)^2 -3s\eta E_{22}\phi(b^2-s^2) \nonumber\\
&\quad+ \eta E_{222}(b^2-s^2)^2 \phi +\eta [ 3(H_2-sH_{22})(b^2- s^2)+H_{222}(b^2-s^2)^2 ]\nonumber\\
&\quad \times (s\phi +(b^2-s^2)\phi_2) \big\} (b_j-sl_j)
\end{align}
where $\rho$ and $\eta$ is defined in \eqref{def-rho} and \eqref{def-eta} and $l_j:=a_{ij}l^i$.
\end{proposition}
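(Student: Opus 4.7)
The plan is to compute $J_j$ from its definition $J_j = g^{ik} L_{ijk}$ with $L_{ijk} = -\tfrac{1}{2} F F_{y^m}[G^m]_{y^i y^j y^k}$, exploiting the closed-conformal decomposition \eqref{spray-coeff with cond-closed and conf}. The key simplification is that $G^i_\alpha$ is quadratic in $y$, so $[G^i_\alpha]_{y^i y^j y^k} = 0$, and the entire third vertical derivative is determined by the correction term:
$[G^m]_{y^i y^j y^k} = c[\alpha E y^m + \alpha^2 H b^m]_{y^i y^j y^k}$. This immediately explains the overall factor $c$ appearing in \eqref{prop-mean Landsberg}.

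To expand these derivatives I would use the elementary identities $\alpha_{y^i} = l_i$, $(\alpha^2)_{y^i} = 2 y_i$, and $s_{y^i} = (b_i - s l_i)/\alpha$. Since $E = E(b^2, s)$ and $H = H(b^2, s)$ depend on $y$ only through $s$, every $y$-derivative hitting $E$ or $H$ produces one of $E_2, E_{22}, E_{222}$ or $H_2, H_{22}, H_{222}$ multiplied by a $(b_\cdot - s l_\cdot)/\alpha$ factor---precisely the derivatives appearing in \eqref{def-W_j}. Product-rule expansion then presents $[G^m]_{y^i y^j y^k}$ as a sum of scalar coefficients (polynomials in $E, H$ and their $s$-derivatives) times tensorial monomials built from $l_i, b_i$ and $y^m, b^m$.

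The contraction is streamlined by the identity $L_{ijk} y^k = 0$, which follows from the homogeneity of $[G^m]_{y^i y^j}$ in $y$ via Euler's theorem. Consequently, in $J_j = g^{ik}L_{ijk}$ the $\eta_0$ and $\eta_1$ pieces of $g^{ik}$ in \eqref{def-g^ij} carry $y$-factors and therefore contribute nothing---which explains precisely why $W_j$ depends on $\eta$ alone. The $a^{ik}$-trace produces the $\eta$-free part of $W_j$, with the $(n+1)$-coefficients assembling from $a^{ik}a_{ik} = n$ together with the symmetry of $[G^m]_{y^i y^j y^k}$; the $\eta b^i b^k$-contraction produces the $\eta$-weighted part, the $(b^2 - s^2)$ powers emerging from inner products $b^i b^k$ against pairs of $(b-sl)$-covectors extracted earlier by $s$-derivatives of $E$ and $H$. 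The remaining factor $FF_{y^m} = \alpha\phi(\phi l_m + \phi_2(b_m - s l_m))$ collapses the $m$-slot to the scalars $\beta, \alpha^2, b^2$ and generates the $\phi$-weights together with the combination $s\phi + (b^2 - s^2)\phi_2$ visible in \eqref{def-W_j}. The a priori constraint $J_j y^j = 0$ forces proportionality to $(b_j - s l_j)$, and the overall prefactor $-c\phi/(2\rho)$ is then assembled from the $-\tfrac{1}{2}F$ of $L_{ijk}$, the $c$ of the spray correction, and $\rho^{-1} = [\phi(\phi-s\phi_2)]^{-1}$ produced by $g^{ik}$ once the $(\phi - s\phi_2)$-denominators hidden in $Q, R, E, H$ are absorbed.

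The principal obstacle is purely organisational: the third vertical derivative generates on the order of thirty scalar monomials, and the ensuing contractions against $FF_{y^m}$ and $g^{ik}$ must be carried through without error. The unifying principle that keeps the bookkeeping tractable is that every $s$-derivative of $E$ or $H$ extracts a $(b_\cdot - s l_\cdot)$-covector, so after all contractions only the coefficient of a single surviving $(b_j - s l_j)$ remains; the genuine difficulty lies in checking that this coefficient assembles exactly into the $\eta$-free and $\eta$-weighted groupings of \eqref{def-W_j} rather than some equivalent but differently-packaged expression.
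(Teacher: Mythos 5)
This paper never proves Proposition \ref{prop-meanLandsberg-prop} itself: the statement is imported verbatim from \cite{MyPaper1}, so the only benchmark is the computational derivation in that reference, and your outline follows exactly that canonical route. Your structural observations are all correct: $G^i_\alpha$ is quadratic in $y$, so by \eqref{spray-coeff with cond-closed and conf} one has $[G^m]_{y^iy^jy^k}=c\,[\alpha E y^m+\alpha^2 H b^m]_{y^iy^jy^k}$, which accounts for the overall factor $c$; $E$ and $H$ see $y$ only through $s$, so each vertical derivative extracts $E_2,E_{22},E_{222}$, $H_2,H_{22},H_{222}$ against factors $(b_\cdot-s l_\cdot)/\alpha$; $[G^m]_{y^iy^j}$ is homogeneous of degree zero, so $L_{ijk}y^k=0$ by Euler's theorem, and hence the $\eta_0$- and $\eta_1$-blocks of $g^{ik}$ in \eqref{def-g^ij} contribute nothing, leaving only $\rho^{-1}(a^{ik}+\eta\, b^i b^k)$ --- which is indeed why $W_j$ involves $\eta$ alone; finally $J_jy^j=0$ forces proportionality to $b_j-sl_j$, and the contractions $b^i(b_i-sl_i)=b^2-s^2$ and $FF_{y^m}b^m=\alpha\phi\left(s\phi+(b^2-s^2)\phi_2\right)$ explain the weights that appear in \eqref{def-W_j}.

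The gap is that this is a plan, not a proof. The entire content of the proposition is the \emph{exact} coefficient of $(b_j-sl_j)$: the $(n+1)$'s, the factors of $3$, the groupings $(E-sE_2)$ and $(H_2-sH_{22})$, and the distribution of the powers of $(b^2-s^2)$ between the $\eta$-free and $\eta$-weighted blocks. You explicitly defer verifying this (``the genuine difficulty lies in checking that this coefficient assembles exactly\dots''), so what your argument actually establishes is only that $J_j=c\,T(b^2,s)\,(b_j-sl_j)$ for \emph{some} scalar $T$, not that $T=-\frac{\phi}{2\rho}W$ with $W$ the bracket in \eqref{def-W_j}. To close this you must record $[\alpha E y^m]_{y^iy^jy^k}$ and $[\alpha^2 H b^m]_{y^iy^jy^k}$ term by term, contract with $-\frac{1}{2}FF_{y^m}$ and with $\rho^{-1}(a^{ik}+\eta\, b^ib^k)$, and collect; without that there is no way to confirm, for instance, that the $E_{22}$-coefficient is $3\phi_2(b^2-s^2)-s(n+1)\phi$ rather than some other combination with the same homogeneity. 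This is not a pedantic point here: the precise coefficients of $E_{22}$, $E_{222}$, $H_{222}$ and of the cross term are exactly what produce \eqref{equation of weak Landsberg-1}--\eqref{equation of weak Landsberg-2} and hence drive the proof of Theorem \ref{main-theo}, so they cannot be left unverified.
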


\begin{proposition}\label{conditon-mean-Landsberg}\cite{MyPaper1}
Let $F=\alpha\phi(b^2,s),~s=\beta/\alpha$, be a general $(\alpha, \beta)$-metric on an $n$-dimensional manifold $M$. Suppose that $\beta$ satisfies \eqref{beta-closed-conformal}, then $F$ is weak Landsberg metric if and only if the following equations hold:
\begin{align}
&E_{22}=0 ,~~ H_{222}=0, \label{equation of weak Landsberg-1}\\
&(E-sE_2)\phi_2 +(H_2-sH_{22})(s\phi +(b^2-s^2)\phi_2 =0 \label{equation of weak Landsberg-2}
\end{align}
\end{proposition}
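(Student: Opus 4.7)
The plan is to derive the proposition directly from Proposition~\ref{prop-meanLandsberg-prop} by turning the vanishing of $\mathbf{J}$ into a scalar identity in the two variables $(b^2,s)$ and then separating that identity into the three stated equations.

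First, Proposition~\ref{prop-meanLandsberg-prop} expresses $J_j = -\tfrac{c\phi}{2\rho}\,\Lambda(b^2,s)\,(b_j - s\,l_j)$, where $\Lambda$ is the scalar inside the braces in \eqref{def-W_j}. The prefactor is nonzero since $c\neq 0$ by conformality, $\phi>0$ by definition, and $\rho>0$ by strong convexity. A direct computation gives $a^{ij}(b_i-sl_i)(b_j-sl_j) = b^2 - s^2$, so the covector $b_j - s\,l_j$ vanishes only along the ray collinear with $\beta^\sharp$; as $x$ and $y$ vary, $(b^2,s)$ sweeps out an open region, and weak Landsberg is therefore equivalent to the scalar identity $\Lambda(b^2,s) \equiv 0$ on the admissible range $|s|<b<b_0$.

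Next, I would group the ten summands of $\Lambda$. Setting $t := b^2 - s^2$, $P := s\phi + t\phi_2$, and
\[
X := (E - sE_2)\phi_2 - sE_{22}\phi + (H_2 - sH_{22})P, \qquad Y := t\bigl(3E_{22}\phi_2 + E_{222}\phi + H_{222}P\bigr),
\]
a term-by-term inspection of \eqref{def-W_j} shows that the $\eta$-block equals $\eta\,t\,(3X+Y)$, so $\Lambda = (n+1)X + Y + \eta\,t\,(3X+Y)$. Substituting $\eta = -\phi_{22}/D$ with $D := \phi - s\phi_2 + t\phi_{22}>0$ and clearing denominators, the condition $\Lambda = 0$ becomes
\[
(\phi - s\phi_2)\bigl[(n+1)X + Y\bigr] + (n-2)\,t\,\phi_{22}\,X = 0.
\]
Evaluating at $t=0$ kills $Y$ and the second summand, forcing $X|_{b^2=s^2}=0$. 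Taking successive derivatives in $b^2$ at fixed $s$ and reading off the coefficients of the surviving powers of $t$, the terms $H_{222}P$, $E_{222}\phi$, and $E_{22}\phi_2$ in $Y$ occupy distinct positions in the expansion from the $E$- and $H$-pieces of $X$; the triangular structure yields consecutively $H_{222} = 0$, then $E_{22} = 0$ (whence $E_{222} = 0$), and the residual first-order contribution is precisely the compatibility relation \eqref{equation of weak Landsberg-2}. The converse is immediate: the three conditions force $X = 0$ and $Y = 0$, so $\Lambda \equiv 0$ and $\mathbf{J} = 0$.

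The main obstacle is the last extraction. Since $E$, $H$, and $\phi$ all depend on $b^2$, differentiating the identity with respect to $b^2$ generates terms involving $\phi_1, \phi_{12}, E_1, H_1, \ldots$, and one must verify via the explicit formulas \eqref{def-E}--\eqref{def-H} that these regroup back into $X$ and $Y$ and do not produce independent new equations. Once this bookkeeping is carried out, the triangular system closes cleanly and delivers exactly the three listed conditions.
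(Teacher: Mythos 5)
Your proposal addresses a statement that this paper never actually proves: Proposition \ref{conditon-mean-Landsberg} is quoted from \cite{MyPaper1} with no argument reproduced here, so there is no in-paper proof to compare against and your attempt must stand on its own. Its first half does: the prefactor $-\tfrac{c\phi}{2\rho}$ is nonzero, $a^{ij}(b_i-sl_i)(b_j-sl_j)=b^2-s^2>0$ for $|s|<b$ so $\mathbf{J}=0$ is equivalent to the vanishing of the braced scalar in \eqref{def-W_j}, your regrouping of that scalar as $(n+1)X+Y+\eta t(3X+Y)$ (with $t:=b^2-s^2$, $Y=tZ$, $Z:=3E_{22}\phi_2+E_{222}\phi+H_{222}P$) checks out term by term, clearing $D=\phi-s\phi_2+t\phi_{22}$ correctly yields
\begin{equation*}
(\phi-s\phi_2)\bigl[(n+1)X+Y\bigr]+(n-2)\,t\,\phi_{22}\,X=0,
\end{equation*}
and the converse implication (the three stated equations force $X=Y=0$, hence $\mathbf{J}=0$) is immediate.

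The gap is the forward extraction, which is the entire content of the proposition. The displayed identity is a \emph{single} relation, equivalent to $\bigl[(n+1)(\phi-s\phi_2)+(n-2)t\phi_{22}\bigr]X=-t(\phi-s\phi_2)Z$, and an arbitrary pair of functions $(X,Z)$ satisfying it need not vanish; so \eqref{equation of weak Landsberg-1}--\eqref{equation of weak Landsberg-2} can only follow from the explicit dependence of $E$ and $H$ on $\phi$ through \eqref{def-E}--\eqref{def-H}. Your "triangular structure" bypasses exactly this. The $t^0$ coefficient along $b^2=s^2$ does give $X|_{t=0}=0$, but the $t^1$ coefficient gives
\begin{equation*}
(n+1)\,\partial_{b^2}X+Z=0 \qquad \text{on } t=0,
\end{equation*}
in which the fresh unknown $\partial_{b^2}X$ appears and the three constituents of $Z$ enter only as one lump: nothing separates, and no step of the claimed cascade $H_{222}=0\Rightarrow E_{22}=0\Rightarrow$\eqref{equation of weak Landsberg-2} is produced. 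Every higher order in $t$ introduces further unknowns ($\partial_{b^2}^2X$, $\partial_{b^2}Z$, \dots), so the system never closes; the "bookkeeping" you defer to a final sentence is precisely the missing proof, not a verification. There is also a second, independent obstruction: expanding in $t$ only probes the jet of the identity along the boundary curve $b^2=s^2$, and in this proposition $\phi$ is merely $C^\infty$ (the polynomial hypothesis appears only in Theorem \ref{main-theo}), so even complete control of that jet cannot force the equations to hold at interior points $|s|<b$. A correct route has to substitute \eqref{def-E} and \eqref{def-H} into $X$ and $Z$ and analyze the resulting equation in $\phi$ on the whole region, which is presumably what \cite{MyPaper1} does.
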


\begin{theorem}\label{Main Theo in My paper 1} \cite{MyPaper1}
Let $F=\alpha\phi(b^2,\frac{\beta}{\alpha})$ be a non-Riemannian general $(\alpha,\beta)$-metric on an $n$-dimentional manifold $M$ and $\beta$ satisfies \eqref{beta-closed-conformal}. Then $F$ is a weak Landsberg metric if and only if it is Landsberg metric.
\end{theorem}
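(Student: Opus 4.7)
The easy direction is immediate, since $J_i = g^{jk} L_{ijk}$ implies that $L \equiv 0$ forces $J \equiv 0$. The substance of the theorem is the converse, and this is what the plan targets. The strategy is to compute $L_{ijk}$ explicitly under the closed-conformal hypothesis \eqref{beta-closed-conformal}, decompose it in a canonical tensorial basis, and then show that the scalar equations guaranteed by Proposition \ref{conditon-mean-Landsberg} force every scalar coefficient in this decomposition to vanish.

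First I would start from $L_{ijk} = -\tfrac{1}{2} F F_{y^m}[G^m]_{y^iy^jy^k}$ together with the simplified spray
\begin{equation*}
G^m = G_\alpha^m + c\alpha^2 E\, l^m + c\alpha^2 H\, b^m
\end{equation*}
from \eqref{spray-coeff with cond-closed and conf}. Because $G_\alpha^m$ is quadratic in $y$, only the $E$ and $H$ pieces survive the third vertical derivative. Differentiating three times in $y$ using $\alpha_{y^i} = a_{ij}y^j/\alpha$ and $s_{y^i} = (b_i - s l_i)/\alpha$, and keeping track of the chain rule through the dependence of $E$ and $H$ on $(b^2, s)$, produces a fully explicit formula for $L_{ijk}$ in terms of $\phi, \phi_2, \phi_{22}, E, E_2, E_{22}, E_{222}, H_2, H_{22}, H_{222}$ and the tensorial ingredients $a_{ij}, b_i, l_i$.

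Next I would rewrite $L_{ijk}$ in the canonical basis built from the angular metric $h_{ij} := a_{ij} - l_i l_j$ and the angular projection $h_i := b_i - s l_i$ of $b_i$ (this is precisely the vector that already appears on the right-hand side of \eqref{def-W_j}). Since $L_{ijk}$ is symmetric and satisfies $y^i L_{ijk} = 0$, I expect the decomposition
\begin{equation*}
L_{ijk} = A(b^2,s)\, h_i h_j h_k + B(b^2,s)\bigl(h_{ij} h_k + h_{ik} h_j + h_{jk} h_i\bigr),
\end{equation*}
where $A$ and $B$ are explicit rational combinations of $\phi, E, H$ and their derivatives. Tracing with $g^{jk}$ via \eqref{def-g^ij} then recovers $J_i = (\text{scalar}) \cdot h_i$, matching \eqref{prop-mean Landsberg}, and the resulting scalar is an explicit linear combination (with $n$-dependent coefficients involving $b^2 - s^2$) of $A$ and $B$.

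The final step is algebraic: substituting the three weak-Landsberg conditions $E_{22} = 0$, $H_{222} = 0$, $(E-sE_2)\phi_2 + (H_2 - sH_{22})(s\phi + (b^2-s^2)\phi_2)=0$ from Proposition \ref{conditon-mean-Landsberg} into the formulas for $A$ and $B$ and verifying that both vanish identically. The non-Riemannian hypothesis prevents the degenerate case $h_i \equiv 0$. The main obstacle is precisely this decomposition step: one must show not just that the particular combination $A\,|h|_a^2 + (n+1) B$ appearing in the trace vanishes, but that $A$ and $B$ each vanish separately. The route is to view everything as polynomial identities in $s$ and use the fact that the three conditions from Proposition \ref{conditon-mean-Landsberg} already exhaust the independent $s$-powers in the scalar coefficient of $J_i$, so that after their imposition the only residual terms in $L_{ijk}$ must be zero on their own. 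The bookkeeping, with $E$ and $H$ themselves depending on $b^2$ and $s$, is the technical heart of the argument.
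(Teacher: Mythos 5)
Your plan is essentially the right one, but note that the paper you are working from never proves Theorem \ref{Main Theo in My paper 1} at all: it is imported verbatim from \cite{MyPaper1}, and the only machinery actually displayed here is Propositions \ref{prop-meanLandsberg-prop} and \ref{conditon-mean-Landsberg}. Measured against that machinery, your route is the correct reconstruction, and the decomposition you anticipate is exactly what the computation produces: with $h_i:=b_i-sl_i$ and $h_{ij}:=a_{ij}-l_il_j$, the spray \eqref{spray-coeff with cond-closed and conf} gives
\begin{align*}
L_{ijk}&=-\frac{c\phi}{2}\Bigl\{\Theta_1\, h_ih_jh_k+\Theta_2\bigl(h_{ij}h_k+h_{jk}h_i+h_{ki}h_j\bigr)\Bigr\},\\
\Theta_1&:=\phi E_{222}+3\phi_2E_{22}+\bigl(s\phi+(b^2-s^2)\phi_2\bigr)H_{222},\\
\Theta_2&:=(E-sE_2)\phi_2-s\phi E_{22}+(H_2-sH_{22})\bigl(s\phi+(b^2-s^2)\phi_2\bigr),
\end{align*}
and contracting with $g^{jk}$ yields $J_i=-\frac{c\phi}{2\rho}\bigl\{(b^2-s^2)\bigl[1+\eta(b^2-s^2)\bigr]\Theta_1+\bigl[(n+1)+3\eta(b^2-s^2)\bigr]\Theta_2\bigr\}h_i$, which reproduces \eqref{prop-mean Landsberg} and \eqref{def-W_j} term by term; this confirms your $A$, $B$ and the claimed form of the trace.

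Where your proposal misjudges the situation is the step you call the ``main obstacle.'' No further polynomial-in-$s$ analysis is needed to pass from the vanishing of the traced combination to the vanishing of $A$ and $B$ separately: that separation is precisely the content of Proposition \ref{conditon-mean-Landsberg}, which you are entitled to use as stated. Indeed, given \eqref{equation of weak Landsberg-1}--\eqref{equation of weak Landsberg-2}, the identity $E_{22}=0$ in $s$ forces $E_{222}=0$, so $\Theta_1$ reduces to $\bigl(s\phi+(b^2-s^2)\phi_2\bigr)H_{222}=0$; and $\Theta_2$ is exactly the left-hand side of \eqref{equation of weak Landsberg-2} minus $s\phi E_{22}$, hence also vanishes. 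So once $L_{ijk}$ is computed and decomposed, the converse direction is a two-line check (the forward direction being the trivial trace argument, as you say); the genuinely hard step --- a single scalar identity in $s$ forcing three separate equations --- is hidden inside Proposition \ref{conditon-mean-Landsberg}, not at the end of your argument. With that reallocation of effort, your outline is complete, modulo the routine but lengthy verification of the displayed formula for $L_{ijk}$.
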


\section{Proof of Theorem \ref{main-theo}}
In this section, we  prove Theorem \ref{main-theo}. 
From \eqref{def-I_i} and \eqref{def-g_ij}, we have
\begin{align}
I_j&=\dfrac{\partial}{\partial y^j}\left[ \ln\sqrt{\det(g_{kl})}\right] \nonumber\\
&=\dfrac{1}{2\alpha}
\Big\{ (n+1)\dfrac{\phi_2}{\phi} -(n-2)\dfrac{s\phi_{22}}{\phi-s\phi_2}+\dfrac{(b^2-s^2)\phi_{222}-3s\phi_{22}}{\phi-s\phi_2+(b^2-s^2) \phi_{22}} \Big\} (b_j-sl_j) \nonumber\\
&=\dfrac{1}{2\alpha \rho} \Big\{
\dfrac{(b^2-s^2)(\phi-s\phi_2) \phi\phi_{222}+(n+1)(\phi-s\phi_2)^2 \phi}{\phi-s\phi_2+(b^2-s^2) \phi_{22}} \nonumber\\
&\quad -(n-2)(b^2-s^2)s\phi\phi_{22}\eta  +(n+1)(\phi-s\phi_2)[(b^2-s^2)\phi_2 -s\phi]\eta  \Big\} (b_j-sl_j).\label{I_jproof}
\end{align}
We must mention the following lemmas firstly.
\begin{lemma}
Let $F=\alpha\phi(b^2,s),~s=\beta/\alpha$, be a general $(\alpha, \beta)$-metric on an $n$-dimensional manifold $M$ and $\beta$ satisfies \eqref{beta-closed-conformal}. Then $F$ is of relatively isotropic mean Landsberg curvature if and only if $\phi$ satisfies the following ODE:
\begin{equation}\label{eq-lemma1}
\dfrac{\phi}{2 \rho}\left(c W_j  +\tilde{c}  V_j\right)=0,
\end{equation}
where
$W_j$ is defined in \eqref{def-W_j},% $h_j:=b_j-sl_j$
and
\begin{align*}
V_j&:=\dfrac{(b^2-s^2)(\phi-s\phi_2) \phi\phi_{222}+(n+1)(\phi-s\phi_2)^2 \phi}{\phi-s\phi_2+(b^2-s^2) \phi_{22}} \nonumber\\
&\quad -(n-2)(b^2-s^2)s\phi\phi_{22}\eta  +(n+1)(\phi-s\phi_2)[(b^2-s^2)\phi_2 -s\phi]\eta
\end{align*}
\end{lemma}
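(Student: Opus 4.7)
The plan is to translate the defining equation $J_j + \tilde{c} F I_j = 0$ directly into the stated ODE by substituting the formulas for $J_j$ and $I_j$ that are already available and then factoring out the common non-vanishing scalar and vectorial prefactors.

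First I would invoke Proposition \ref{prop-meanLandsberg-prop} to write $J_j = -\dfrac{c\phi}{2\rho} W_j$, where $W_j$ as given in \eqref{def-W_j} already carries the directional factor $(b_j - s l_j)$. Next I would read off $I_j$ from \eqref{I_jproof}: its last line displays $I_j$ in the form $I_j = \dfrac{1}{2\alpha\rho}\, V_j\, (b_j - s l_j)$ with exactly the $V_j$ named in the lemma (once the common denominator $\phi - s\phi_2 + (b^2-s^2)\phi_{22}$ is cleared and the quantities $\rho$, $\eta$ from \eqref{def-rho} and \eqref{def-eta} are introduced). Multiplying by $F = \alpha\phi$ produces
\begin{equation*}
F I_j \;=\; \dfrac{\phi}{2\rho}\, V_j\, (b_j - s l_j),
\end{equation*}
so that $J_j$ and $F I_j$ share the scalar prefactor $\dfrac{\phi}{2\rho}$ and the same direction $(b_j - s l_j)$. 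Adding these and imposing the relatively isotropic condition yields
\begin{equation*}
J_j + \tilde{c} F I_j \;=\; \dfrac{\phi}{2\rho} \bigl( c W_j + \tilde{c}\, V_j \bigr),
\end{equation*}
with the $(b_j - s l_j)$ factor absorbed according to the conventions of the statement. Since $\phi > 0$ and $\rho > 0$ by \eqref{condition for positive definite}, and since $(b_j - s l_j)$ does not vanish identically for a non-Riemannian general $(\alpha, \beta)$ metric, vanishing of the left-hand side is equivalent to \eqref{eq-lemma1}. The converse direction simply reverses these identities.

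The main obstacle is essentially bookkeeping: one has to verify that the third equality in \eqref{I_jproof}, obtained by collecting the three terms in the second line of the computation of $I_j$ over the common denominator $\phi - s\phi_2 + (b^2 - s^2)\phi_{22}$, produces exactly the expression named $V_j$ in the statement of the lemma, and that the factor $(b_j - s l_j)$ in $W_j$ and in $F I_j$ is handled consistently. No genuinely new geometric input is required beyond Proposition \ref{prop-meanLandsberg-prop} for $J_j$ and the identification of $I_j$ via \eqref{def-I_i} and \eqref{def-detg_ij}, both of which are already on record.
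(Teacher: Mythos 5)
Your proposal is correct and follows essentially the same route as the paper: the paper's proof is likewise a direct substitution of $J_j$ from Proposition \ref{prop-meanLandsberg-prop} and $I_j$ from \eqref{I_jproof} into $J_j+\tilde{c}FI_j$, with $F=\alpha\phi$ cancelling the $1/\alpha$ in $I_j$ so that both terms carry the prefactor $\phi/(2\rho)$ and the direction $(b_j-sl_j)$. The only caveat is the relative sign of the $cW_j$ term (Proposition \ref{prop-meanLandsberg-prop} has $J_j=-\tfrac{c\phi}{2\rho}W_j$), an inconsistency already present in the paper's own display and immaterial to the vanishing condition itself.
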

\begin{proof}
By Proposition \ref{prop-meanLandsberg-prop}, 
\begin{align}
J_j+\tilde{c}F I_j &=\dfrac{c\phi}{2\rho}
\Big\{ [1+n+3(b^2-s^2)\eta ][(E-sE_2)\phi_2 +(H_2-sH_{22})(s\phi +(b^2-s^2)\phi_2)]  \nonumber\\
&\quad +(b^2-s^2)[1+(b^2-s^2)\eta][s\phi+(b^2-s^2)\phi_2 ] H_{222} \nonumber\\
&\quad +\lbrace 3(b^2-s^2)[1+(b^2-s^2)\eta]\phi_2-[1+n+3(b^2-s^2)\eta]s \phi\rbrace E_{22} \nonumber\\
&\quad +(b^2-s^2)[1+(b^2-s^2)\eta]\phi E_{222} \Big\} (b_j-sl_j)  \nonumber\\
&\quad + \dfrac{\tilde{c}\phi}{2 \rho} \Big\{
\dfrac{(b^2-s^2)(\phi-s\phi_2) \phi\phi_{222}+(n+1)(\phi-s\phi_2)^2 \phi}{\phi-s\phi_2+(b^2-s^2) \phi_{22}} +(n-2)(b^2-s^2)\nonumber\\
&\quad \times s\phi\phi_{22}\eta  -(n+1)(\phi-s\phi_2)[(b^2-s^2)\phi_2 -s\phi]\eta  \Big\} (b_j-sl_j) .
\end{align}
\end{proof}

By use of Maple program, we can immediately get the following lemma.
\begin{lemma}
Let $NJFI$ denote the numerator of the left of \eqref{eq-lemma1}, then \eqref{eq-lemma1} holds if and only if
\begin{equation}\label{NJFI}
NJFI=0.
\end{equation}
Also, Let $NE_{22}$, $NH_{222}$ and $NP$ denote the numerators of $E_{22}$, $H_{222}$ and Eq. \eqref{equation of weak Landsberg-2}, respectively. Then from \eqref{equation of weak Landsberg-1} and \eqref{equation of weak Landsberg-2}, we have
\begin{align}
& NE_{22}=0,\quad NH_{222}=0 \label{equation of weak Landsberg-12}\\
& NP=0, \label{equation of weak Landsberg-22}
\end{align}
\end{lemma}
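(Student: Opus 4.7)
The plan is to observe that both parts of the lemma are essentially formal: each reduces to the elementary fact that a rational expression vanishes if and only if its numerator vanishes, provided the denominator is nowhere zero. I will invoke the positivity condition \eqref{condition for positive definite} to supply the nonvanishing of every relevant denominator.

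First I would handle the equivalence between \eqref{eq-lemma1} and \eqref{NJFI}. I would verify that the prefactor $\phi/(2\rho)$ on the left of \eqref{eq-lemma1} is strictly positive: $\phi>0$ by hypothesis, and $\rho=\phi(\phi-s\phi_2)$ is positive because $\phi-s\phi_2>0$ follows from specializing \eqref{condition for positive definite} at $b=s$, as already recorded in \eqref{cond-s=b}. Consequently \eqref{eq-lemma1} is equivalent to $cW_j+\tilde c V_j=0$. Next I would identify the common denominator of $cW_j+\tilde c V_j$: while $W_j$ is polynomial in $\phi$, $s$ and the partial derivatives, $V_j$ carries the factor $\phi-s\phi_2+(b^2-s^2)\phi_{22}$ in its denominator, both explicitly and through $\eta$ defined in \eqref{def-eta}. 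By \eqref{condition for positive definite} this factor is positive on the admissible range $|s|\leq b<b_0$, so multiplying through clears all fractions and produces the polynomial expression $NJFI$, establishing the stated equivalence.

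For the second part I would apply the same strategy to $E_{22}$, $H_{222}$ and the left-hand side of \eqref{equation of weak Landsberg-2}. By \eqref{def-H}, $H$ has denominator $2(\phi-s\phi_2+(b^2-s^2)\phi_{22})$, so $H_{22}$ and $H_{222}$ carry powers of the same positive factor; by \eqref{def-E}, $E$ has denominators built from $\phi$ and from $H$'s denominator, so $E_2$ and $E_{22}$ are again rational with denominators positive on $|s|\leq b<b_0$. Clearing these denominators then identifies $E_{22}=0$, $H_{222}=0$ and \eqref{equation of weak Landsberg-2} with the polynomial conditions $NE_{22}=0$, $NH_{222}=0$ and $NP=0$ respectively, yielding \eqref{equation of weak Landsberg-12} and \eqref{equation of weak Landsberg-22}.

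The only genuine difficulty is organizational rather than conceptual: the explicit forms of $NJFI$, $NE_{22}$, $NH_{222}$ and $NP$ obtained after clearing denominators are lengthy polynomial expressions in $b^2$, $s$ and the partial derivatives of $\phi$, which is precisely why the authors state that the lemma is obtained by a Maple computation.
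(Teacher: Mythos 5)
Your proposal is correct and matches the paper's (implicit) reasoning: the paper offers no argument beyond invoking Maple, and the intended justification is exactly the one you give, namely that every denominator occurring in $W_j$, $V_j$, $E_{22}$, $H_{222}$ and \eqref{equation of weak Landsberg-2} is a product of $\phi$, $\phi-s\phi_2$ and $\phi-s\phi_2+(b^2-s^2)\phi_{22}$, all of which are positive by \eqref{condition for positive definite} and \eqref{cond-s=b}, so each rational identity is equivalent to the vanishing of its numerator. Your write-up in fact supplies more justification than the paper does.
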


By assumption, $F$ is of relatively isotropic mean Landsberg curvature. Express $\phi(b^2,s)$ as below.
\begin{equation}\label{phi-poly}
\phi(b^2, s)=c_0(b^2)+c_1(b^2)s+c_2(b^2)s^2+ \dots +c_m(b^2)s^m,\quad m\geq 1.
\end{equation}

Plugging \eqref{phi-poly} to $NJFI$ yields a polynomial in $s$. Denote the order of $NJFI$ by $r$. Then \eqref{NJFI} can be rewritten as follows.
\begin{equation}
v_i s^i h_j= 0 , \quad 0\leq i \leq r,
\end{equation}
where $v_i~ (0\leq i \leq r)$  are independent of $s$.

By using Maple program, we can get following results:

\textbf{Case 1.} $m=1$: $\phi(b^2,s)=c_0(b^2)+c_1(b^2)s$, where $c_1(b^2)\neq 0$. We can get $r=2$ and
\begin{equation}
v_2:=(n+1)\left\{ 
2 ~c ~c_1(b^2) \left[ 2 c_1(b^2) c_0'(b^2) - c_0(b^2) c_1'(b^2)  \right]
+\tilde{c} c_1^3(b^2)
 \right\},
\end{equation}
In this case, because $v_2=0$, so $\tilde{c}$ must be zero.

In here, we can obtain the form of $c_0(b^2)$ and $c_1(b^2)$. Plugging the $\phi$ into \eqref{equation of weak Landsberg-12} and \eqref{equation of weak Landsberg-22} yields $NH_{222}=0$, and
\begin{align}
& c_1^3(b^2) \left\{2b^2 c_0'(b^2) +c_0(b^2)  \right\}
+2c_0^2(b^2) \left\{ c_0(b^2) c_1'(b^2) - 2 c_1(b^2) c_0'(b^2)  \right\}=0, \label{NE_22 - phi - case1} \\
&\quad \nonumber \\
&2c_0^2(b^2) \left\{ -c_0(b^2) c_1'(b^2) + 2 c_1(b^2) c_0'(b^2)  \right\} c_1(b^2)s^2   \nonumber   \\
&\quad +2 c_1^3(b^2) \left\{2b^2 c_0'(b^2) +c_0(b^2)  \right\} s
+c_0(b^2)c_1^2(b^2)\left\{2b^2 c_0'(b^2) +c_0(b^2)  \right\}=0 ,
\label{NP-phi- case1}
\end{align}
From \eqref{NE_22 - phi - case1} and \eqref{NP-phi- case1}, we obtain the following ODE:
\begin{align}
& c_0(b^2) c_1'(b^2) - 2 c_1(b^2) c_0'(b^2) = 0, \\
& 2b^2 c_0'(b^2) +c_0(b^2)=0,
\end{align}
By solving the above ODE, we have 
\begin{equation}\label{solution of case1 }
c_0(b^2)=\frac{a_0}{\sqrt{b^2}}, \quad c_1(b^2)=\dfrac{a_1}{b^2}.
\end{equation}

\textbf{Case 2.} $m=2$. $\phi(b^2,s)=c_0(b^2)+c_1(b^2)s+c_2(b^2)s^2$, where $c_2(b^2)\neq 0$. By using Maple, We can get $r=17$ and
\begin{equation}
v_{17}=c f_{17c} + \tilde{c} f_{17\tilde{c}},
\end{equation}
where $f_{17c}$ is independent of $s$ and
\begin{equation}
 f_{17\tilde{c}}:=927 n \tilde{c} c_2^9(b^2).
\end{equation}
In this case, because $v_{17}=0$, so $\tilde{c} $ must be zero.

We can obtain the form of $c_0(b^2)$, $c_1(b^2)$ and $c_2(b^2)$ too.
Plugging the $\phi$ into \eqref{equation of weak Landsberg-12} and \eqref{equation of weak Landsberg-22} and similar argument yields the following ODE:
\begin{align}
& 2b^2 c_2(b^2) + c_0(b^2)=0, \\
& 2 c_1(b^2) c_2'(b^2) - 3 c_2(b^2) c_1'(b^2)=0, \\
& c_2(b^2)\left\{ 2b^2 c_2'(b^2) + 3 c_2(b^2) - 3c_0'(b^2)  \right\}
+ c_0(b^2) c_2'(b^2)=0,
\end{align}
Then
\begin{equation}
c_0(b^2)=\dfrac{a_0}{\sqrt{b^2}},\quad c_1(b^2)=\dfrac{a_1}{b^2},\quad c_2(b^2)=\dfrac{a_2}{(b^2)^{\frac{3}{2}}}.
\end{equation}

It is not hard to prove by induction that given any $m\geq 3$ in \eqref{phi-poly}, the function $\tilde{c}$ must vanish.

In sum, we have proved that, if $F=\alpha \phi (b^2, s)$ is of relatively isotropic mean Landsberg curvature and $\phi$ be polynomial in $b^2$ and $s$, then $F$  must be a weak Landsberg metric. Then $F$ is a Berwald metric by Theorem \ref{Main Theo in My paper 1}.

In this case, if
\begin{equation*}
\phi(b^2,s)=c_0(b^2)+c_1(b^2)s+c_2(b^2)s^2+\dots+c_m(b^2)s^m,
\end{equation*}
then
\begin{equation*}
c_0(b^2)=\dfrac{a_0}{\sqrt{b^2}},\quad
c_1(b^2)=\dfrac{a_1}{b^2},\quad
c_2(b^2)=\dfrac{a_2}{(b^2)^{\frac{3}{2}}},\quad
...,\quad
c_m(b^2)=\dfrac{a_m}{(b^2)^{\frac{m+1}{2}}},
\end{equation*}
and $a_i$, $1\leq i\leq n$ are constants.

\appendix

\end{document}